\newcommand\specialsectioning{\setcounter{secnumdepth}{-2}}
\DeclareTextSymbolDefault{\textquotedbl}{T1}
\theoremstyle{plain}
\newtheorem{thm}{\protect\theoremname}[section]
\theoremstyle{plain}
\newtheorem*{thm*}{\protect\theoremname}
\theoremstyle{plain}
\newtheorem{ques}[thm]{\protect\questionname}
\theoremstyle{plain}
\newtheorem{lem}[thm]{\protect\lemmaname}
\theoremstyle{definition}
\theoremstyle{definition}
\theoremstyle{plain}
\theoremstyle{definition}
\newtheorem*{rem*}{\protect\remarkname}
\theoremstyle{plain}
\newtheorem{cor}[thm]{\protect\corollaryname}
\theoremstyle{plain}
\newtheorem*{fact*}{\protect\factname}
\theoremstyle{definition}
\theoremstyle{definition}
\providecommand{\corollaryname}{Corollary}
  \providecommand{\definitionname}{Definition}
  \providecommand{\factname}{Fact}
  \providecommand{\lemmaname}{Lemma}
  \providecommand{\propositionname}{Proposition}
  \providecommand{\remarkname}{Remark}
\providecommand{\theoremname}{Theorem}
\providecommand{\examplename}{Example}
\providecommand{\questionname}{Conjecture}
\newcommand\blfootnote[1]{ 
\begingroup 
\renewcommand\thefootnote{}\footnote{#1}
\addtocounter{footnote}{-1}
\endgroup 
}
\providecommand{\corollaryname}{Corollary}
  \providecommand{\definitionname}{Definition}
  \providecommand{\factname}{Fact}
  \providecommand{\lemmaname}{Lemma}
  \providecommand{\propositionname}{Proposition}
  \providecommand{\remarkname}{Remark}
\providecommand{\theoremname}{Theorem}
\providecommand{\corollaryname}{Corollary}
  \providecommand{\definitionname}{Definition}
  \providecommand{\factname}{Fact}
  \providecommand{\lemmaname}{Lemma}
  \providecommand{\propositionname}{Proposition}
  \providecommand{\remarkname}{Remark}
\providecommand{\theoremname}{Theorem}
\providecommand{\corollaryname}{Corollary}
  \providecommand{\definitionname}{Definition}
  \providecommand{\factname}{Fact}
  \providecommand{\lemmaname}{Lemma}
  \providecommand{\propositionname}{Proposition}
  \providecommand{\remarkname}{Remark}
\providecommand{\theoremname}{Theorem}
\providecommand{\corollaryname}{Corollary}
  \providecommand{\definitionname}{Definition}
  \providecommand{\factname}{Fact}
  \providecommand{\lemmaname}{Lemma}
  \providecommand{\propositionname}{Proposition}
  \providecommand{\remarkname}{Remark}
\providecommand{\theoremname}{Theorem}
\providecommand{\corollaryname}{Corollary}
  \providecommand{\definitionname}{Definition}
  \providecommand{\factname}{Fact}
  \providecommand{\lemmaname}{Lemma}
  \providecommand{\propositionname}{Proposition}
  \providecommand{\remarkname}{Remark}
\providecommand{\theoremname}{Theorem}
\providecommand{\corollaryname}{Corollary}
  \providecommand{\definitionname}{Definition}
  \providecommand{\factname}{Fact}
  \providecommand{\lemmaname}{Lemma}
  \providecommand{\propositionname}{Proposition}
  \providecommand{\remarkname}{Remark}
\providecommand{\theoremname}{Theorem}
\providecommand{\corollaryname}{Corollary}
  \providecommand{\definitionname}{Definition}
  \providecommand{\factname}{Fact}
  \providecommand{\lemmaname}{Lemma}
  \providecommand{\propositionname}{Proposition}
  \providecommand{\remarkname}{Remark}
\providecommand{\theoremname}{Theorem}
\providecommand{\corollaryname}{Corollary}
  \providecommand{\definitionname}{Definition}
  \providecommand{\factname}{Fact}
  \providecommand{\lemmaname}{Lemma}
  \providecommand{\propositionname}{Proposition}
  \providecommand{\remarkname}{Remark}
\providecommand{\theoremname}{Theorem}
\providecommand{\corollaryname}{Corollary}
  \providecommand{\definitionname}{Definition}
  \providecommand{\factname}{Fact}
  \providecommand{\lemmaname}{Lemma}
  \providecommand{\propositionname}{Proposition}
  \providecommand{\remarkname}{Remark}
\providecommand{\theoremname}{Theorem}
\providecommand{\corollaryname}{Corollary}
  \providecommand{\definitionname}{Definition}
  \providecommand{\factname}{Fact}
  \providecommand{\lemmaname}{Lemma}
  \providecommand{\propositionname}{Proposition}
  \providecommand{\remarkname}{Remark}
\providecommand{\theoremname}{Theorem}
\providecommand{\corollaryname}{Corollary}
  \providecommand{\definitionname}{Definition}
  \providecommand{\factname}{Fact}
  \providecommand{\lemmaname}{Lemma}
  \providecommand{\propositionname}{Proposition}
  \providecommand{\remarkname}{Remark}
\providecommand{\theoremname}{Theorem}
\providecommand{\corollaryname}{Corollary}
  \providecommand{\definitionname}{Definition}
  \providecommand{\factname}{Fact}
  \providecommand{\lemmaname}{Lemma}
  \providecommand{\propositionname}{Proposition}
  \providecommand{\remarkname}{Remark}
\providecommand{\theoremname}{Theorem}
\providecommand{\corollaryname}{Corollary}
\providecommand{\definitionname}{Definition}
\providecommand{\factname}{Fact}
\providecommand{\lemmaname}{Lemma}
\providecommand{\propositionname}{Proposition}
\providecommand{\remarkname}{Remark}
\providecommand{\theoremname}{Theorem}
\newcommand{\Addresses}{{
  \bigskip
  \footnotesize

  \textsc{ School of Mathematical Sciences, Xiamen University, Xiamen, Fujian 361005, P.R. China;}
  
 Zhengxing Lian: \texttt{lianzx@mail.ustc.edu.cn; lianzx@xmu.edu.cn}

\medskip  
  
  \par\nopagebreak

 \textsc{Institute of Mathematics, Polish Academy of Sciences, ul. \'{S}niadeckich 8, 00-656 Warszawa, Poland.}

  Ruxi Shi: \texttt{rshi@impan.pl}

}}
\title{A counter-example for polynomial version of Sarnak's conjecture}
\author{Zhengxing Lian, Ruxi Shi
}
\begin{document}
\maketitle
\blfootnote{Z.L. was partially supported by the National Science Centre (Poland) grant 2016/22/E/ST1/00448 and Xiamen Youth Innovation Foundation No. 3502Z20206037.}
\begin{abstract}
    We construct the counter-example for polynomial version of Sarnak's conjecture for minimal systems, which assets that the M\"obius function is linearly disjoint from subsequences along polynomials of deterministic sequences realized in minimal systems. Our example is in the class of Toeplitz systems, which are minimal.
\end{abstract}

\section{Introduction}\label{sec:1}
The M\"{o}bius function $\mu: \mathbb{N}\rightarrow \{-1,0,1\}$ is defined by
$\mu(1)=1$ and
\begin{equation}\label{M-function}
  \mu(n)=\left\{
           \begin{array}{ll}
             (-1)^k & \hbox{if $n$ is a product of $k$ distinct primes;} \\
             0 & \hbox{otherwise.}
           \end{array}
         \right.
\end{equation}
Let $(X,T)$ be a {\it topological dynamical system}, namely $X$ is a compact metrizable space and $T:X\to X$ a homeomorphism. We say that a sequence $\xi=(\xi(n))_{n\in \mathbb{N}}$ is {\em realized} in $(X,T)$ if there is a continuous function $f\in C(X)$ on $X$ and a point $x\in X$
such that $\xi(n) = f(T^nx)$ for all $n\in\mathbb{N}$. Moreover, the sequence $\xi$ is said to be {\em deterministic} if it is realized in a
system of zero topological entropy. Here is the well-known Sarnak conjecture  \cite{Sar}.

\medskip

\noindent {\bf Sarnak Conjecture:}\ {\em
The M\"{o}bius function $\mu$ is linearly disjoint from any deterministic sequence $\xi$. That is,
\begin{equation}\label{Sarnak}
  \lim_{N\rightarrow \infty}\frac{1}{N}\sum_{n=1}^N\mu(n)\xi(n)=0.
\end{equation}
}
Eisner proposed a variant of Sarnak conjecture, called polynomial Sarnak conjecture for minimal systems \cite[Conjecture 2.3]{E2018}.

\medskip

\noindent {\bf Polynomial Sarnak Conjecture:}\ {\em
Let $(X,T)$ be a minimal topological dynamical system with entropy zero. Then 
\begin{equation}\label{polynomial sar eq}
  \lim_{N\rightarrow \infty}\frac{1}{N}\sum_{n=1}^N\mu(n)f(T^{p(n)}x)=0
\end{equation}
holds for every $f\in C(X)$, every polynomial $p:\mathbb{N}\rightarrow \mathbb{N}_0$ and every $x\in X$.
}

\medskip

Obviously, polynomial Sarnak conjecture implies Sarnak conjecture. It is well-known that Sarnak conjecture follows from Chowla conjecture (see \cite{Sar,AbKuLeDe,Tao}). Even though Sarnak conjecture has been proven for various dynamical systems, it is still widely open in general. We refer to \cite{FKL2018} for a survey of many recent results on Sarnak conjecture. See also \cite{AbKuLeDe,huang2017sequences,Shi2018} for other research around Sarnak conjecture.

Green and Tao \cite[Theorem 1.1]{GreenTao} proved that nilsystems satisfy polynomial Sarnak conjecture. Using Green and Tao's result and adopting the proof that the M\"obius function is a good weight for the
classical pointwise ergodic theorem \cite[Proposition 3.1]{AbKuLeDe} (see also \cite[Theorem C]{FanKon2019}), Eisner \cite[Theorem 2.2]{E2018} proved that \eqref{polynomial sar eq} holds for $f\in L^q(\nu)$ and for $\nu$-a.e. $x$ where $q>1$ and $\nu$ is any $T$-invariant measure. Fan \cite{Fan2019} proved such result for weight (other than M\"obius function) having Davenport's exponent $>1/2$.


As mentioned in \cite{E2018}, Nikos Frantzikinakis and Mariusz Lema\'{n}czyk observed that polynomial Sarnak conjecture is false without the minimality assumption. Let $\mathbf{a}=(a_n)_{n\in \mathbb{Z}}$ with 
\begin{equation}a_n:=\begin{cases} \mu(k), & n=k^2\text{ for some }k\in \mathbb{N} \\
 0 & \text{otherwise}
 \end{cases}.
 \end{equation}Then the sequence $\mathbf{a}$ is deterministic and $\lim_{N\rightarrow \infty}\frac{1}{N}\sum_{n=1}^N\mu(n)a_{n^2}=\frac{6}{\pi^2}$. One can see more details in \cite[Page 4]{E2018}. 
 
 In \cite{HLSY2019}, the authors showed that every sequence is `close to' a Toeplitz sequence in Besicovitch distance, whose entropy is controlled by the entropy of the original sequence. The proof in \cite{HLSY2019} is constructive. In the present paper, inspired by the construction in \cite{HLSY2019}, we will  find a counterexample of polynomial Sarnak conjecture for minimal systems.
Here is our main result.

 \begin{thm}\label{main theorem}
 Let $\epsilon>0$ be an arbitrary small number. 
 Then there exists a Toeplitz sequence $\mathbf{b}$ such that
 \begin{enumerate}
    \item the dynamical system $(X_{\mathbf{b}},\sigma)$ is of entropy zero;
    \item ${\displaystyle\limsup_{N\rightarrow \infty}\frac{1}{N}\left|\sum_{n=1}^Nb_{n^2}\mu(n)\right|>\frac{6}{\pi^2}-\epsilon.}$
\end{enumerate}
  \end{thm}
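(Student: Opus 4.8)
The plan is to start from the Frantzikinakis--Lema\'{n}czyk sequence $\mathbf{a}$ and manufacture a Toeplitz sequence $\mathbf{b}$ that keeps the same correlation along the squares. The point is that $a_{n^2}=\mu(n)$, so $b_{n^2}\mu(n)=\mu(n)^2$ whenever $b_{n^2}=\mu(n)$, and $\frac1N\sum_{n\le N}\mu(n)^2$ is the density of squarefree integers, which tends to $6/\pi^2$. I would therefore reduce the theorem to producing a Toeplitz $\mathbf{b}$ whose \emph{good set} $G=\{n:\ b_{n^2}=\mu(n)\}$ has a complement $B=\mathbb{N}\setminus G$ of upper density $\overline d(B)<\epsilon/2$: splitting the sum over $G$ and $B$ and bounding each term of the $B$-sum by $1$ in absolute value gives
\begin{equation*}
\frac1N\sum_{n=1}^N b_{n^2}\mu(n)\ \ge\ \frac1N\#\{n\le N:\ n\text{ squarefree}\}\ -\ \frac{2\,|B\cap[1,N]|}{N},
\end{equation*}
so that $\limsup_N\frac1N\big|\sum_{n\le N}b_{n^2}\mu(n)\big|\ge \frac{6}{\pi^2}-2\,\overline d(B)$, and only $\overline d(B)<\epsilon/2$ remains to be arranged.

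For the construction I would use the multi-scale Toeplitz filling of \cite{HLSY2019}, fixing a rapidly increasing chain of periods $p_1\mid p_2\mid\cdots$, ranges $[A_k,B_k)$, and the growth condition $p_k>B_k^2$. At stage $k$, for each $n\in[A_k,B_k)$ whose position $n^2$ is still a hole, I fill the entire residue class $n^2 \bmod p_k$ with the value $\mu(n)$; since $p_k>B_k^2$, the squares $n^2$ with $n<B_k$ are pairwise incongruent modulo $p_k$, so no two $\mu$-values ever compete for one class. Interleaved stages fill a positive proportion of the remaining (almost all non-square) holes with a fixed symbol, arranged so that the density of holes tends to $0$ and the sequence is regularly recurrent; this makes $\mathbf{b}$ a genuine Toeplitz sequence and $(X_{\mathbf{b}},\sigma)$ minimal.

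It then remains to verify the two conclusions. For the correlation, a position $n^2$ with $n\in[A_k,B_k)$ acquires a wrong value only if it was already filled at some earlier stage $j<k$, i.e.\ $n^2\equiv m^2 \bmod p_j$ for some $m\in[A_j,B_j)$; for fixed $j$ (and, say, $p_j$ chosen so that square roots modulo $p_j$ are few) the count of such $n\le B_k$ is $O(B_jB_k/p_j)$, a vanishing proportion once $p_j>B_j^2$ and the parameters grow fast enough, which forces $\overline d(B)<\epsilon/2$. For the entropy, I would observe that $\mathbf{b}$ consists of a periodic background together with $\mu$-values deposited on the density-zero set of squares; a window of length $\ell$ meets $O(\sqrt\ell)$ squares, so the number of distinct length-$\ell$ subwords is at most $\ell\cdot 3^{O(\sqrt\ell)}=e^{o(\ell)}$ and the topological entropy vanishes --- the same mechanism that makes $\mathbf{a}$ itself deterministic.

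The hard part is reconciling the Toeplitz requirement with the aperiodicity of $\mu$: filling a residue class modulo $p_k$ forces a single value onto infinitely many positions, whereas the correlation needs $b_{n^2}=\mu(n)$ for a density-one set of \emph{distinct} $n$. The multi-scale scheme with $p_k\gg B_k^2$ is what breaks this tension, but the real work is the simultaneous bookkeeping of three densities: the holes must tend to $0$ (for minimality), the squares corrupted by cross-scale collisions must stay below $\epsilon$ (for the correlation), and the information-bearing positions must have density $0$ (for zero entropy). Choosing the growth rates of $A_k,B_k,p_k$ so that all three hold at once is the crux, and the slack provided by $\epsilon>0$ is precisely what makes a consistent choice possible.
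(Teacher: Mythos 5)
Your reduction is the same as the paper's: it suffices to make the set $B=\{n:\ b_{n^2}\neq\mu(n)\}$ have upper density $<\epsilon/2$, since $\frac1N\sum_{n\le N}\mu(n)^2\to 6/\pi^2$. But your construction has a gap exactly where the paper has to work hardest. For $\mathbf{b}$ to be Toeplitz, the ``interleaved stages'' must fill a family of residue classes whose union has density tending to $1$; the square--classes $n^2\bmod p_k$ you reserve for $\mu$-values have total density at most $\sum_k (B_k-A_k)/p_k$, which is negligible. So a square $n^2$ with $n\in[A_k,B_k)$ is threatened not only by the cross-scale collisions $n^2\equiv m^2\ (\mathrm{mod}\ p_j)$ that you count, but by the hole-filling classes of every earlier stage $j<k$, and these you never count. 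The fraction of $n\in[A_k,B_k)$ corrupted by stage $j$ is essentially $\frac1{p_j}\sum_{q\in F_j}r_j(q)$, where $F_j$ is the set of filled classes and $r_j(q)=\#\{x\bmod p_j:\ x^2\equiv q\}$; since $\sum_q r_j(q)=p_j$, keeping this small while $\#F_j/p_j\to 1$ requires the square-root mass $r_j$ to concentrate on a vanishing-density set of classes, and if one instead tries to leave all quadratic-residue classes unfilled one loses regular recurrence at positions that are residues modulo every $p_j$. Quantifying exactly this kind of count is the content of the paper's Lemma 3.1 (at most $2^{r+2}$ square roots of $4^r(8s+1)$ modulo $2^n$, summing to $O(2^{n_{m-1}})$ over the short corrupting interval), and your proposal has no substitute for it: the paper's construction does not try to protect the squares at all, but copies the central block of $\mathbf{a}^{(M-1)}$ onto the positions within $l_{M-1}$ of each multiple of $l_M=2^{n_M}$ and then \emph{counts} the corrupted squares via that lemma.

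A second, smaller problem is the entropy argument. Your $\mathbf{b}$ is not ``a periodic background with $\mu$-values on the density-zero set of squares'': you fill \emph{entire residue classes} $n^2\bmod p_k$ with $\mu(n)$, so a window of length $\ell$ far from the origin meets far more than $O(\sqrt\ell)$ information-bearing positions, and the bound $\ell\cdot 3^{O(\sqrt\ell)}$ does not apply to $\mathbf{b}$ (it is the right bound for $\mathbf{a}$). The conclusion is still salvageable by the standard Toeplitz estimate --- an $\ell$-word is determined by the starting position modulo the largest $p_k\le\ell$ together with the symbols at the remaining holes, whose density tends to $0$ --- but that is a different argument from the one you give, and it is essentially the one the paper runs in its Lemma 3.5, via $p_{\mathbf{b}}(l_M)\le (l_M+1)\,p_{\mathbf{a}}(l_M)^2$ with $p_{\mathbf{a}}$ of polynomial growth.
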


 A generalization of Theorem \ref{main theorem}, where M\"obius function is replaced by other arithmetic functions, is presented in Section \ref{sec:remark} (Theorem \ref{main theorem 2}). Theorem \ref{main theorem 2} is proven by the same argument as in the proof of Theorem \ref{main theorem}. Thus we focus on the proof of Theorem \ref{main theorem}.

  
 Since Toeplitz systems are minimal, Theorem \ref{main theorem} admits the following corollary \footnote{The equation \eqref{polynomial sar eq} fails for the Toeplitz system generated by $\mathbf{b}$  with $p(n)=n^2$, $x=\mathbf{b}$ and $f$ projection on the $0$-coordinate.}.
 \begin{cor}\label{main cor}
Polynomial Sarnak conjecture for minimal systems is false.
\end{cor}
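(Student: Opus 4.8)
The plan is to reduce Theorem \ref{main theorem} to the construction of a single Toeplitz sequence $\mathbf{b}\in\{-1,0,1\}^{\mathbb{Z}}$ for which $b_{n^2}=\mu(n)$ holds for all $n$ outside a set of upper density at most $\delta$, with $\delta=\delta(\epsilon)<\epsilon/2$. Indeed, writing $G\subseteq\mathbb{N}$ for the set of such ``good'' $n$ and using $\mu(n)^2=\mathbf{1}_{\{n\ \mathrm{squarefree}\}}$, for all large $N$ one gets
\[
\frac1N\sum_{n\le N}b_{n^2}\mu(n)\ \ge\ \frac1N\#\{n\le N:\,n\in G,\ n\ \mathrm{squarefree}\}-\frac1N\#\{n\le N:\,n\notin G\}\ \ge\ \frac{6}{\pi^2}-2\delta-o(1),
\]
since squarefree numbers have density $6/\pi^2$ and each bad $n$ contributes a term of modulus at most $1$. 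This already gives item (2) (in fact with $\liminf$), while item (1) will follow by arranging that $\mathbf{b}$ is a \emph{regular} Toeplitz sequence, these having topological entropy zero. Thus everything reduces to prescribing the values of a Toeplitz sequence along the density-zero set of perfect squares.

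To build such a $\mathbf{b}$ I would run a staged Toeplitz filling in the spirit of \cite{HLSY2019}, but with moduli tailored to the squaring map. Fix rapidly increasing moduli $P_1\mid P_2\mid\cdots$, each $P_j$ a product of distinct odd primes with $\omega(P_j)\to\infty$ (here $\omega$ counts distinct prime factors), together with ranges $I_j=(M_{j-1},M_j]$, $M_j\to\infty$. At stage $j$, working modulo $P_j$ (chosen so large that $n^2<P_j$ for all $n\le M_j$): (i) for each $n\in I_j$ whose coordinate $n^2$ is not yet filled, set the whole residue class $n^2+P_j\mathbb{Z}$ equal to $\mu(n)$, so that $b_{n^2}=\mu(n)$ and this coordinate becomes $P_j$-periodic; (ii) fill with $0$ every residue class that is a quadratic non-residue modulo $P_j$; (iii) leave the remaining quadratic-residue classes as holes, to be refined at later stages. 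Since $P_j\mid P_{j+1}$ and later fillings only refine earlier ones, this produces a well-defined $\mathbf{b}$.

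Three points then need checking. First, $\mathbf{b}$ is Toeplitz and regular: a class $r+P_j\mathbb{Z}$ can contain a perfect square only if $r$ is a quadratic residue modulo every prime dividing $P_j$, so the density of classes left as holes at stage $j$ is at most $\prod_{p\mid P_j}\frac{p+1}{2p}=2^{-\omega(P_j)}\prod_{p\mid P_j}(1+\tfrac1p)\to0$; moreover every non-square integer is a quadratic non-residue modulo some prime (an integer that is a square modulo all primes is a perfect square), hence is eventually filled with $0$, so every coordinate is eventually periodic. Regularity yields entropy zero, giving (1). Second, the bad $n$ are sparse: a coordinate $n^2$ is pre-filled only when $n^2\equiv m^2\pmod{P_i}$ for some earlier reserved $m\le M_i$, and for fixed $m$ the density of such $n$ is the number of square roots of $m^2$ modulo $P_i$ divided by $P_i$, i.e.\ at most $2^{\omega(P_i)}/P_i$; hence the upper density of bad $n$ is at most $\sum_i M_i\,2^{\omega(P_i)}/P_i$, which is forced below $\delta$ by letting each $P_i$ grow fast enough relative to $M_i\,2^{\omega(P_i)}$. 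Third, feeding this into the display of the first paragraph gives the correlation bound.

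The main obstacle is exactly the tension between these requirements, and it dictates the choice of moduli. Reserving $\mu(n)$ at $n^2$ forces the \emph{entire} residue class of $n^2$ to take that value, which corrupts the conjugate squares $m\equiv\pm n$ and, more seriously, means the classes carrying square-information can never be neutralized; with a prime-power period these carrier classes occupy a positive proportion (about one half) of all residues, so the holes cannot be driven to $0$ and one is trapped in the irregular, possibly positive-entropy, regime --- precisely the difficulty in adapting \cite{HLSY2019} to a density-zero subsequence. Using moduli with many distinct prime factors dissolves this: the quadratic-residue classes, the only ones that must be reserved for future squares, then have density $2^{-\omega(P_j)}\to0$, so $\mathbf{b}$ is regular and deterministic while still matching $\mu$ along almost every square. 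Balancing the growth of $P_j$ against $\omega(P_j)$ and the checkpoints $M_j$ is the delicate quantitative heart of the argument.
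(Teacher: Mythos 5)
Your proposal is correct in outline, but it reaches Theorem \ref{main theorem} (and hence the corollary, via $x=\mathbf{b}$, $p(n)=n^2$, $f$ the $0$-coordinate projection, and minimality of Toeplitz systems) by a genuinely different construction. The paper Toeplitz-izes the Frantzikinakis--Lema\'nczyk sequence by the block-copying scheme of \cite{HLSY2019} with dyadic periods $l_M=2^{n_M}$, and the arithmetic burden is carried by Lemma \ref{number theory lemma}: each value $4^r(8s+1)$ has at most $2^{r+2}$ square roots modulo $2^{n}$, which via Lemma \ref{lem:2} shows the positions corrupted at earlier stages meet $\{k^2:k\le K_M\}$ in small density; entropy zero comes from word counting (Lemma \ref{lem:1}). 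You instead prescribe residue classes directly modulo squarefree odd moduli $P_j$ with $\omega(P_j)\to\infty$, exploiting that modulo such $P_j$ a square has at most $2^{\omega(P_j)}$ square roots while the quadratic-residue classes have density $\prod_{p\mid P_j}\tfrac{p+1}{2p}\to 0$; this replaces Lemmas \ref{number theory lemma} and \ref{lem:2}, and regularity of the Toeplitz sequence replaces Lemma \ref{lem:1}. Your route gives a cleaner quantitative picture (collisions controlled by $2^{\omega}$, holes by roughly $2^{-\omega}$), at the cost that Toeplitzness is no longer automatic.

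Two details need shoring up. First, for every non-square coordinate (including every negative one) to be eventually killed by step (ii), the prime divisors of the $P_j$ must exhaust all odd primes (e.g.\ $P_j=\prod_{2<p\le T_j}p$); for an arbitrary sequence of primes the fact you invoke does not apply (e.g.\ $2$ is a residue modulo every $p\equiv\pm1\pmod 8$), and coordinate $0$ needs separate (trivial) treatment. Second, $\sum_i M_i2^{\omega(P_i)}/P_i$ is only the main term in the density of bad $n$: counting the union of at most $M_i2^{\omega(P_i)}$ progressions modulo $P_i$ inside $[1,N]$ incurs an additive error $\sum_i M_i2^{\omega(P_i)}$, negligible along checkpoints $N=M_J$ (choose $M_J$ large at stage $J$) but not for arbitrary $N$; so your parenthetical $\liminf$ claim needs more care, while the $\limsup$ statement --- all that Theorem \ref{main theorem} and the corollary require --- goes through.
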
 
We point out that, simultaneously and independently of this work,  Adam Kanigowski, Mariusz Lema\'nczyk and Maksym Radziwi\l\l ~\cite{AdamLemMak} obtained an elegant alternative proof of Corollary \ref{main cor}. Their proof is based on the construction of their main counterexamples to prime number theorem in the class of continuous Anzai skew products. This Anzai skew product is uniquely ergodic, as well as minimal.

  
\section{Preliminiries}

In this section, we recall several basic notions in topological dynamical systems.

\subsection{Transitivity and minimality}

A topological system $(X, T)$ is said to be {\em transitive} if there exists some point
$x\in X$ whose orbit $\text{Orb}(x,T)=\{T^nx: n\in \mathbb{Z}\}$ is dense in $X$ and
we call such a point a {\em transitive point}. The system is said to be {\em
minimal} if the orbit of any point is dense in $X$.
A point $x\in X$ is called a {\em minimal point} if
$(\overline{\text{Orb}(x,T)}, T)$ is minimal.


\subsection{Symbolic dynamics}

Let $S$ be a finite alphabet with $k$ symbols, $k \ge 2$. We assume that $S=\{1,2,\cdots,k\}$. Let $\Sigma_k=S^{\mathbb{Z}}$ be the
space of all bi-infinite sequences $${\bf x}=\ldots x_{-1}x_0x_1 \ldots=(x_i)_{i\in \mathbb{Z}}, x_i \in
S$$
with the product topology. A metric on $\Sigma_k$ compatible with the topology is
given by $d({\bf x},{\bf y})=\frac{1}{1+m}$ for ${\bf x},{\bf y} \in \Sigma_k$, where $m=\min \{|n|:x_n \not= y_n
\}$. The shift map $\sigma: \Sigma_k \rightarrow \Sigma_k$
is the homeomorphism defined by $(\sigma {\bf x})_n = x_{n+1}$ for all $n \in \mathbb{Z}$. The
pair $(\Sigma_k,\sigma)$ is called a {\em full-shift dynamical system}. Any subsystem of $(\Sigma_k, \sigma)$
is called a {\em subshift system}.

Each element of $S^{\ast}= \bigcup_{k \ge 1} S^k$ is called {\em a word} or {\em a block} (over $S$).
We use $|A|=n$ to denote the length of $A$ if $A=a_1\ldots a_n$.
If $\omega=(\ldots \omega_{-1} \omega_0 \omega_1 \ldots) \in \Sigma_k$ and $a \le b \in \mathbb{Z}$, then
$\omega[a,b]=\omega_{a} \omega_{a+1} \ldots \omega_{b}$ is a $(b-a+1)$-word occurring in
$\omega$ starting at place $a$ and ending at place $b$.
Similarly we define $A[a,b]$ when $A$ is a word. A finite-length word $A$ {\em appears} in the word $B$ if there are some $a\le b$ such that
$B[a,b]=A$.

Let $(X,\sigma)$ be a subshift system. The cardinality of the collection of all $n$-words of $X$ is denoted by $p_X(n)$. Then the {\em topological entropy} of $(X,\sigma)$ is defined by
$$h(X,\sigma)=\lim_{n\to\infty}\frac{\log  p_X(n)}{n}.$$

Let ${\bf \omega}\in \Sigma_k$. Denote the orbit closure of ${\bf \omega}$ by $X_{\bf \omega}$. We call $h(X_{\bf \omega}, \sigma)$ the {\em entropy} of the sequence ${\bf \omega}$, and also denoted it by $h({\bf \omega})$.

\subsection{Odometers}

Fixing a sequence of positive integers $(n_i)_{i\in \mathbb{N}}$ with  $n_i\ge 2$, let $$X=\prod_{i=1}^\infty\{0,1,\ldots, n_i-1\}$$ with the discrete topology on each coordinate and the product topology on X. Let $T$ be the transformation that is the addition of $(1,0,0,\ldots)$ with carrying to the right. In other words, the transformation $T$ has the form
$$T(x_1,x_2,\ldots,x_k,x_{k+1},\ldots)=(0,0,\ldots,0,x_k+1,x_{k+1},\ldots)$$
where $k$ is the least entry such that $x_k<n_k-1$, and if there is no such $k$ then it produces $(0,0,\ldots)$. This system $(X,T)$ is called an {\em adding machine} or {\em odometer}. We write $(X,T)=\underleftarrow{\lim}(\mathbb{Z}_{n_i},T_{n_i})$, where $(\mathbb{Z}_{n_i},T_{n_i})$ is the $n_i$ element rotation. 

\subsection{Toeplitz systems}

Let $(X,T)$ be topological dynamical system. A point $x$ is {\em regularly recurrent} if for any open neighborhood $U$ of $x$, there is some $l_U\in \mathbb{N}$ such that $l_U\mathbb{Z}\subset \{n\in \mathbb{Z}:T^nx\in U\}$. It is clear that a regularly recurrent point is a minimal point. It is well-known that a topological dynamical system is a subsystem generated by a regularly recurrent point if and only if  it is a minimal almost one-to-one extension of an adding machine (see for instance \cite[Theorem 5.1]{downarowicz2005survey}).


A regularly recurrent point in a symbolic dynamics
is called a {\em Toeplitz sequence}. The subshift defined by a Toeplitz sequence ${\bf \omega}$ is called a {\em Toeplitz system}. Toeplitz systems can be characterized up
to topological conjugacy  with the following three properties (see for example \cite[Theorem 7.1 and Page 14]{downarowicz2005survey}):
\begin{itemize}
    \item [(1)] minimal,
    \item [(2)] almost 1-1 extensions of adding machines,
    \item [(3)] symbolic.
\end{itemize}

\section{Construction of the counter-example.}
In this section, we construct a counter-example to polynomial Sarnak conjecture. We first prove a technical arithmetic lemma which is crucial to the construction.

\subsection{A technical lemma.}

An integer $q$ is called a {\em quadratic residue modulo $n$} if 
there exists an integer $x$ such that
$$
x^2\equiv q \mod{n}.
$$
Throughout this section, we focus on prime power modulus. It is well-known that a nonzero number is a residue modulo $2^n$ with $n\ge 1$ if and only if it is of the form $4^r(8s+1)$ for some non-negative integers $r$ and $s$.

For $n\ge1$, we partition  $\{1,2, 3,\dots,2^n\}$ into
$$
B_{r,s,n}=\{1\leq k \leq 2^{n}: k^2\equiv 4^r(8s+1)\mod 2^n\},
$$
for $r,s\ge0$ and $4^r(8s+1)\le 2^n$.
\begin{lem}\label{number theory lemma}
Let $n\ge 1$. For any non-negative integers $r,s$ satisfying $4^r(8s+1)\le 2^n$, we have
$$\#B_{r,s,n}=2^r \#B_{0,s,n-2r}\leq 2^{r+2}.$$
\end{lem}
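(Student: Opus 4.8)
The plan is to count the solutions of $k^2\equiv 4^r(8s+1)\pmod{2^n}$ by first pinning down the $2$-adic valuation of any solution, and then reducing the general case to the odd case $r=0$ through the substitution $k=2^r m$. Write $u=8s+1$, which is odd, so that the target $a:=4^r u$ satisfies $v_2(a)=2r$, where $v_2$ denotes the $2$-adic valuation; note that the hypothesis $4^r(8s+1)\le 2^n$ already forces $2r\le n$, so this substitution will make sense.

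First I would establish that every $k\in B_{r,s,n}$ satisfies $v_2(k)=r$. Indeed, if $k^2\equiv a\pmod{2^n}$, then $2^n\mid k^2-a$; comparing valuations, whenever $v_2(k^2)<n$ and $v_2(k^2)\neq 2r=v_2(a)$ one has $v_2(k^2-a)=\min\{v_2(k^2),2r\}<n$, a contradiction. Hence $v_2(k^2)=2r$, i.e. $v_2(k)=r$, and I may write $k=2^r m$ with $m$ odd. (The boundary situation $2r=n$ forces $s=0$ and $a\equiv 0$, so that the condition becomes $2^r\mid k$; I would dispose of this case directly, as it gives exactly $2^{n-r}=2^r$ solutions, consistent with the stated formula under the convention for modulus $1$.)

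Next comes the reduction. Dividing the congruence $2^{2r}m^2\equiv 2^{2r}u\pmod{2^n}$ by $2^{2r}$ yields $m^2\equiv u\pmod{2^{n-2r}}$, which is precisely the defining congruence of $B_{0,s,n-2r}$. The range $1\le k\le 2^n$ with $k=2^r m$ translates to $1\le m\le 2^{n-r}$. Since the congruence on $m$ depends only on $m\bmod 2^{n-2r}$, and since $2^{n-r}=2^r\cdot 2^{n-2r}$, the interval $\{1,\dots,2^{n-r}\}$ consists of exactly $2^r$ complete residue systems modulo $2^{n-2r}$; each solution class is thus hit exactly $2^r$ times. The parity constraint on $m$ is automatic because $u$ is odd forces every solution of $m^2\equiv u$ to be odd. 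This gives the first equality $\#B_{r,s,n}=2^r\,\#B_{0,s,n-2r}$.

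Finally, for the bound I would invoke the classical structure of $(\mathbb{Z}/2^M\mathbb{Z})^\times\cong \mathbb{Z}/2\mathbb{Z}\times\mathbb{Z}/2^{M-2}\mathbb{Z}$: the squaring map on units has kernel $\{\pm 1,\,2^{M-1}\pm 1\}$ of order at most $4$, so every odd quadratic residue has at most $4$ square roots modulo $2^M$. Applying this with $M=n-2r$ gives $\#B_{0,s,n-2r}\le 4=2^2$, and therefore $\#B_{r,s,n}\le 2^{r+2}$. I expect the conceptual crux to be the valuation dichotomy in the first step, while the genuinely fiddly part will be the careful bookkeeping of the lift count together with the small/degenerate modulus cases ($n-2r\in\{0,1,2\}$ and $2r=n$), where the counts $1,2,4$ must be checked separately rather than read off from the generic formula.
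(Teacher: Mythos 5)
Your proof is correct. For the identity $\#B_{r,s,n}=2^r\,\#B_{0,s,n-2r}$ you follow essentially the same route as the paper: factor $k=2^r m$, reduce the congruence to $m^2\equiv 8s+1 \pmod{2^{n-2r}}$, and note that the range $1\le m\le 2^{n-r}$ is a union of $2^r$ complete residue systems modulo $2^{n-2r}$, so each solution class is counted $2^r$ times. You are in fact more careful than the paper on one point: the paper passes directly to $k/2^r$ without justifying that $2^r$ divides $k$, whereas your $2$-adic valuation dichotomy (together with the separate treatment of the degenerate case $2r=n$, where $s$ must be $0$ and the count $2^r$ is verified directly) supplies exactly that missing justification. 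Where you genuinely diverge is the bound $\#B_{0,s,M}\le 4$: you invoke the structure theorem $(\mathbb{Z}/2^M\mathbb{Z})^\times\cong\mathbb{Z}/2\mathbb{Z}\times\mathbb{Z}/2^{M-2}\mathbb{Z}$ for $M\ge 3$ and read off that the kernel of the squaring map is $\{\pm 1,\,2^{M-1}\pm 1\}$, so an odd residue has at most four square roots; the paper instead derives the same conclusion by hand, showing from $a^2\equiv b^2\pmod{2^M}$ with $a,b$ odd and $m=b-a$ that $\left(a+\tfrac{m}{2}\right)\tfrac{m}{2}\equiv 0\pmod{2^{M-2}}$, whence $m$ takes one of four values and $B_{0,s,M}$ is contained in a four-element set determined by any single solution $a$. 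The two arguments have the same content; yours is shorter but imports a classical fact, the paper's is self-contained, and both require the same hand-check of the small moduli ($M\le 2$, and $M=3$ in the paper's version), which you correctly flag as needing separate verification.
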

\begin{proof} 
Let $r,s$ be non-negative integers satisfying $4^r(8s+1)\le 2^n$. Clearly, we have $0\le r\leq \lfloor \frac{n}{2} \rfloor $, where $\lfloor x\rfloor$ is the greatest integer less than or equal to $x$. We firstly consider the case that $r\ge 1$. Let $k\in B_{r,s,n}$. It follows that \begin{equation*}
    \left(\frac{k}{2^r} \right)^2\equiv 8s+1 \mod 2^{n-2r},
\end{equation*}
implying that 
\begin{equation}\label{eq:tech 1}
    \frac{1}{2^r}B_{r,s,n}\cap\{1, 2, \dots, 2^{n-2r} \}=B_{0,s,n-2r}.
\end{equation}
We observe that 
\begin{equation*}
    1\le \frac{k}{2^r}\leq 2^{n-r}~\text{and}~ \left(\frac{k}{2^r}\right)^2\equiv \left(\frac{k}{2^r}+\ell\cdot2^{n-2r}\right)^2 \mod 2^{n-2r},
\end{equation*}
for all $0\le \ell\le 2^{r}-1$. This means that 
\begin{equation}\label{eq:tech 2}
    \begin{split}
        \frac{1}{2^r}B_{r,s,n}\cap&\{\ell\cdot2^{n-2r}+1, \ell\cdot2^{n-2r}+2, \dots, (\ell+1)\cdot 2^{n-2r} \} \\
&=\ell\cdot2^{n-2r}+\frac{1}{2^r}B_{r,s,n}\cap\{1, 2, \dots, 2^{n-2r} \}, 
    \end{split}
\end{equation}
for all $0\le \ell\le 2^{r}-1$. Combining \eqref{eq:tech 1} and \eqref{eq:tech 2}, we establish that 
$\#B_{r,s,n}=2^r\#B_{0,s,n-2r}.$ It remains to show that $\#B_{0,s,n}\leq 4$.

Let $a\in B_{0,s,n}$. Clearly, the integer $a$ is odd. It is easy to check that $\#B_{0,s,1}=1$, $\#B_{0,s,2}=2$ and $ \#B_{0,s,3}=4$ (in fact, $s$ has to be $0$ in these cases). Now we assume $n\ge 3$. Suppose $b\in B_{0,s,n}$ with $a^2\equiv b^2 \mod 2^n$. Without loss of generality, we might assume $b\ge a$. Let $m=b-a$ which is an even integer. By the fact that $a^2\equiv (a+m)^2\mod 2^n$, we get $(2a+m)m\equiv 0\mod 2^n$. Since $m$ is even, we have 
$$
\left(a+\frac{m}{2}\right)\frac{m}{2}\equiv 0 \mod 2^{n-2}.
$$
Since $a$ is odd, we see that the two integers $a+\frac{m}{2}$ and $\frac{m}{2}$ are exactly one odd number and one even number. It follows that $2^{n-2}$ divides one of the two integers $(a+\frac{m}{2})$ and $\frac{m}{2}$. Since $0\leq m<2^n$, we obtain that $m$ takes the value in one of $0$, $2^{n-1}$, $2^{n-1}-2a$ and $2^{n}-2a$. Therefore, we have 
$$
B_{0,s,n}\subset \{a,a+2^{n-1},2^{n-1}-2a,2^{n}-2a\}.
$$
This completes the proof.


\end{proof}
\subsection{Proof of main theorem.}\label{proof of main theorem}
Firstly we review the main result in \cite[Theorem 1.1]{HLSY2019} concerning the construction of a Toeplitz sequence ``close to" the given sequence. 
\begin{thm}\label{Toeplitz b}
Let $\mathbf{a}=(a_n)_{n\in \mathbb{Z}}\in \sum_k$. For any $\epsilon>0$, there exists a Toeplitz sequence $\mathbf{b}=(b_n)_{n\in \mathbb{Z}}$ such that 
\begin{enumerate}
    \item $h(X_{\mathbf{b}},\sigma)\leq 2h(X_{\mathbf{a}},\sigma)$;
    \item $\lim_{N\rightarrow \infty}\frac{1}{2N+1}\sum_{n=-N}^{N}|a_n-b_n|<\epsilon$.
\end{enumerate}
\end{thm}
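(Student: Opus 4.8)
The plan is to construct $\mathbf{b}$ directly as a regularly recurrent point by a hierarchical hole-filling scheme, using the characterization recalled above that a Toeplitz sequence is precisely a symbolic almost one-to-one extension of an odometer. I would fix a rapidly increasing sequence of periods $p_1 \mid p_2 \mid \cdots$ with $p_{i+1}/p_i \ge 2$ (this data is exactly the base of the target odometer) together with hole densities $\delta_i \downarrow 0$, and define $\mathbf{b}$ as the pointwise limit of partially defined, $p_i$-periodic patterns $\mathbf{b}^{(i)}$. At stage $i$ a set of positions forming a union of residue classes modulo $p_i$ is assigned symbols, and only a $\delta_i$-fraction of positions is left undetermined (the \emph{holes}), to be resolved at finer scales. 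Arranging the parameters so that the holes have density tending to $0$ and every position is eventually filled guarantees that $\mathbf{b}$ is well defined and regularly recurrent, hence Toeplitz, and that $(X_{\mathbf{b}},\sigma)$ is the induced almost one-to-one extension of the odometer with base $(p_i)$.

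The content lies in how the symbols are chosen. At stage $i$ I would read off the length-$p_i$ block statistics of $\mathbf{a}$ and fill each newly determined residue class $r \bmod p_i$ with the symbol that agrees best with $\mathbf{a}$ along the arithmetic progression $r + p_i\mathbb{Z}$, leaving as holes precisely those positions where no good agreement is yet available, to be reconsidered at scale $p_{i+1}$. The crucial quantitative input is that the number of distinct length-$p_i$ words of $\mathbf{a}$ grows subexponentially at exponential rate $h(X_{\mathbf{a}})$, so the density of positions on which a period-$p_i$ skeleton is \emph{forced} to disagree with $\mathbf{a}$ can be driven below any prescribed level as $i\to\infty$; this is exactly the step that consumes the low complexity of $\mathbf{a}$.

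For the Besicovitch bound I would estimate $\frac{1}{2N+1}\sum_{n=-N}^{N}|a_n-b_n|$ by splitting positions according to the stage at which they were filled. Positions filled at stage $i$ contribute an error bounded by the stage-$i$ skeleton discrepancy plus a boundary term of order $p_i/N$, while unfilled positions contribute at most the residual hole density. Choosing the $\delta_i$ and the growth of the $p_i$ so that these discrepancies together with the tail hole density sum to less than $\epsilon$ then yields conclusion (2).

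The remaining and, in my view, hardest step is conclusion (1), the entropy bound with the explicit constant $2$. I would count the length-$n$ words of $X_{\mathbf{b}}$ by observing that such a word is determined by its offset in the odometer (contributing no exponential growth) together with the data specifying the filling on a window of length $n$. This data splits into two parts: \emph{which} positions are holes at each relevant scale, and \emph{which} symbols fill the non-holes; each part is governed by the number of admissible blocks of $\mathbf{a}$ of length at most $n$, that is by $p_{X_{\mathbf{a}}}(n)$. Taking logarithms and dividing by $n$, the symbol-choice part contributes $h(X_{\mathbf{a}})$ and the hole-location part contributes at most a further $h(X_{\mathbf{a}})$, which is the origin of the factor $2$. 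Making this bookkeeping rigorous, in particular ensuring that the encoding of the hole pattern does not leak extra entropy, is the main obstacle; I expect it to require tuning the growth of the $p_i$ so that successive scales are nearly independent and the boundary words between consecutive periods are asymptotically negligible.
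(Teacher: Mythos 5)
Your overall architecture (a hierarchy of periods $p_1\mid p_2\mid\cdots$, a decreasing density of holes, periodicity of every eventually-filled position) is the right frame for producing a Toeplitz sequence, but the step that actually carries the theorem is wrong. You propose to fill each determined residue class $r\bmod p_i$ with the \emph{single symbol} that best agrees with $\mathbf{a}$ along $r+p_i\mathbb{Z}$. For a general $\mathbf{a}\in\Sigma_k$ the best constant symbol on a progression agrees with $\mathbf{a}$ on as little as a $1/k$-fraction of that progression, so this rule leaves a disagreement density bounded away from $0$; and deferring the bad positions to ``holes'' does not rescue it, both because the holes must themselves form a union of residue classes (otherwise the limit is not Toeplitz) and because the same obstruction recurs verbatim at every finer scale. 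Your claimed ``crucial quantitative input'' --- that low word-complexity of $\mathbf{a}$ forces a period-$p_i$ skeleton to agree with $\mathbf{a}$ up to small density --- is false: the Thue--Morse sequence has linear complexity, yet along every progression $r+2^i\mathbb{Z}$ both symbols occur with density $1/2$, so no constant-per-class skeleton approximates it. Moreover the theorem is asserted for \emph{arbitrary} $\mathbf{a}$, including full-entropy points, so no complexity input is available at all.

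The construction the paper relies on (and reproduces in Section \ref{proof of main theorem}) sidesteps this entirely: the data written periodically is not a majority symbol but an entire central block of the previous stage, namely $\mathbf{a}^{(M)}[rl_M-l_{M-1},\,rl_M+l_{M-1}-1]=\mathbf{a}^{(M-1)}[-l_{M-1},\,l_{M-1}-1]$ for all $r$, everything else being left equal to $\mathbf{a}^{(M-1)}$. Then $\mathbf{b}$ agrees with $\mathbf{a}$ \emph{exactly} outside the stamped copies, whose total density is at most $\sum_M 2l_{M-1}/l_M<\epsilon$, which gives conclusion (2) with no hypothesis on $\mathbf{a}$; and each coordinate $n$ acquires period $l_M$ as soon as $l_{M-1}>|n|$, which gives the Toeplitz property. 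The factor $2$ in conclusion (1) also has a different origin from the one you sketch: every $l_M$-word of $\mathbf{b}$ occurs inside a concatenation $w_1w_2$ of two \emph{aligned} $l_M$-blocks, and each aligned block of $\mathbf{b}$ is an aligned block of $\mathbf{a}$ with its ends overwritten by fixed words, whence $p_{\mathbf{b}}(l_M)\le (l_M+1)\,p_{\mathbf{a}}(l_M)^2$ and $h(X_{\mathbf{b}},\sigma)\le 2h(X_{\mathbf{a}},\sigma)$. I would redo your argument with block-stamping in place of majority voting; as written, conclusion (2) cannot be reached by your scheme.
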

 
 


Actually, the proof of Theorem \ref{Toeplitz b} is to construct a suitable sequence $\{\mathbf{a}^{(M)}\}_{M=1}^\infty$ (based on $\mathbf{a}$) by induction and then take ${\bf b}= (b_n)_{n\in \mathbb{Z}}= \lim_{M\to\infty} \mathbf{a}^{(M)}$.
The proof of our main theorem is influenced by this construction. Here is the rough outline of our proof. We set $\mathbf{a}$ to be the sequence given by Nikos  Frantzikinakis  and  Mariusz  Lema\'nczyk mentioned in Section \ref{sec:1}. We would like to find $\mathbf{b}$ of zero entropy so that not only  the Ces\`aro averages $\frac{1}{N}\sum_{n=1}^{N} |a_n-b_n|$ but also $\frac{1}{N}\sum_{n=1}^{N} |a_{n^2}-b_{n^2}|$ are sufficiently small. Since the sequence $\{n^2\}_{n=1}^{\infty}$ has zero density in $\mathbb{N}$, Theorem \ref{Toeplitz b} does not provide any information on the average $\frac{1}{N}\sum_{n=1}^{N} |a_{n^2}-b_{n^2}|$. Thus Theorem \ref{Toeplitz b} is not enough. 
By using Lemma \ref{number theory lemma} and adopting the method in the proof of Theorem \ref{Toeplitz b}, we carefully construct a specific sequence $\{\mathbf{a}^{(M)}\}_{M=1}^\infty$  and take ${\bf b}$ to be the limit of $ \mathbf{a}^{(M)}$. 

 \begin{proof}[Proof of Theorem \ref{main theorem}]
 Let $\epsilon>0$.
 Let $\mathbf{a}=(a_n)_{n\in \mathbb{Z}}$ be defined by 
\begin{equation}\label{value an}a_n:=\begin{cases} \mu(k), & n=k^2\text{ for some }k\in \mathbb{N} \\
 0 & \text{otherwise}.
 \end{cases}
 \end{equation}
We see from the proof of Lemma \ref{lem:1} that $h(X_{\mathbf{a}},\sigma)=0$.
 We would like to construct a Toeplitz sequence ${\bf b}= (b_n)_{n\in \mathbb{Z}}$ such that $h(X_{\mathbf{b}},\sigma)\leq 2h(X_{\mathbf{a}},\sigma)=0$ and
 the Ces\`aro averages of the sequence $|a_{n^2}-b_{n^2}|$ are less than $\epsilon$ along a sequence of increasing subsets of $\mathbb{N}$. Then we have the following 
 \begin{equation}\label{eq:goal}
      {\displaystyle\limsup_{N\rightarrow \infty}\frac{1}{N}\sum_{n=1}^Nb_{n^2}\mu(n)>\frac{6}{\pi^2}-\epsilon.}
 \end{equation} In fact, we will construct the sequence $\{\mathbf{a}^{(M)}\}_{M=1}^\infty$ by induction on $M$ and take ${\bf b}= (b_n)_{n\in \mathbb{Z}}$ as the limit of $\mathbf{a}^{(M)}$.

Now we begin to construct the sequence $\{\mathbf{a}^{(M)}\}_{M=1}^\infty$ by induction on $M$. Choose a decreasing sequence $(\epsilon_i)_{i\in \mathbb{N}}$ of positive numbers such that $\epsilon_0=\epsilon$ and $\epsilon_{i+1}<\epsilon_i/2$. Pick an increasing sequence $(n_i)_{i\in \mathbb{N}}$ of non-negative integers such that $n_0=0$ and $n_i>\max\{ 2n_{i-1}, n_{i-1}+2-\log_2 \epsilon_i\} $. 

\medskip

\noindent \textbf{Step $1$}: Let $l_1=2^{n_1}\in \mathbb{N}$. Then $\frac{4}{l_1}\leq \epsilon_1$. Note that $a_0=0$. Let ${\bf a^{(1)}}=\{a_n^{(1)}\}_{n\in \mathbb{Z}}$ be the sequence defined as follows:
$$a_n^{(1)}=\left\{
              \begin{array}{ll}
                0, & \hbox{if $n\in l_1\mathbb{Z}$;} \\
                a_n, & \hbox{otherwise.}
              \end{array}
            \right.
$$

\medskip

Assume that the sequences $\mathbf{a}^{(i)}$ are well defined for all $i
\le M-1$. Now we construct $\mathbf{a}^{(M)}$.

\medskip

\noindent \textbf{Step $M$:}  Let $l_M=2^{n_M}\in \mathbb{N}$. Then $n_M > 2n_{M-1}$ and 
\begin{equation}\label{nM and nM-1}
    \frac{4l_{M-1}}{l_M}=4\cdot 2^{n_{M-1}-n_M}\leq \epsilon_M.
\end{equation} 
Define $\mathbf{a}^{(M)}\in \{-1,0,1\}^{\mathbb{Z}}$  as follows:

\begin{equation}\label{equation-construction}
   \left\{
              \begin{array}{ll}
                \mathbf{a}^{(M)}[rl_M-l_{M-1}, rl_M+l_{M-1}-1]=\mathbf{a}^{(M-1)}[-l_{M-1}, l_{M-1}-1], & \hbox{for all $r\in \mathbb{Z}$;} \\
                a_n^{(M)}=a_n^{(M-1)}, & \text{otherwise}.
              \end{array}
            \right.
\end{equation}


\medskip

By induction on $M$, the sequences  $\mathbf{a}^{(M)}$ are well established for all $M\in \mathbb{N}$. It is clear that the limit ${\bf b}= \lim_{M\to\infty} \mathbf{a}^{(M)}$ exists and that $\mathbf{b}$ is a Toeplitz sequence. 
By Lemma \ref{lem:1}, the dynamical system $(X_{\mathbf{b}},\sigma)$ is of entropy zero. 
Now we only need to show \eqref{eq:goal}.
 

Let $D=\{n\in \mathbb{Z}:a_n\neq b_n\}$ and $$D_M=\{n\in \mathbb{Z}:a_n^{(M-1)}\neq a_n^{(M)}\},~\text{for}~M\ge 1.$$
Since ${\bf b}$ is the limit of $\mathbf{a}^{(M)}$, it is clear that $D\subset \bigcup_{M=1}^{\infty}D_M$. 
It is easy to see that $\min_{n\in D_M}|n|\geq 2^{n_M}-2^{n_{M-1}}$. 
We denote by $K_M$ the largest integer that is smaller than $(2^{n_M}-2^{n_{M-1}})^{1/2}$ and divided by $2^{n_{M-1}}$.
Let $A_M=\{k^2:1\leq k\leq  K_M\}$. Clearly, $\# A_M=K_M$.
By Lemma \ref{lem:2}, we have that
\begin{equation}\label{number of sets 0}\frac{\#\left(A_M\cap (\bigcup_{m=1}^{M-1}D_m)\right)}{K_M}<\frac{1}{2}\sum_{m=1}^{M-1}\epsilon_m,
\end{equation}
for all $M\geq 1$. 
By the fact that $a_n,b_n\in \{-1,0,1\}$ and the definitions of $D$ and $K_M$, we have that 
\begin{equation}\label{an sim bn}|\frac{1}{K_M}\sum_{n=1}^{K_M}(a_{n^2}-b_{n^2})\mu(n)|\leq 2\frac{\#\left(A_M\cap D\right)}{K_M}, ~\text{for all}~M\ge 1.
\end{equation}
As $\min_{n\in D_M}|n|\geq 2^{n_M}-2^{n_{M-1}}$, we see that $A_M\cap D_{M'}=\emptyset$ for any $M'\geq M$. It follows that 
\begin{equation}\label{eq:main thm 000}
    \#\left(A_M\cap D\right)\leq \#\left(A_M\cap (\bigcup_{m=1}^{\infty}D_m)\right)=\#\left(A_M\cap (\bigcup_{m=1}^{M-1}D_m)\right).
\end{equation}
Combining \eqref{number of sets 0}, \eqref{an sim bn} and \eqref{eq:main thm 000}, we obtain that  $$|\frac{1}{K_M}\sum_{n=1}^{K_M}(a_{n^2}-b_{n^2})\mu(n)|\leq2\frac{\#\left(A_M\cap D\right)}{K_M}\leq\frac{\#\left(A_M\cap (\bigcup_{m=1}^{M-1}D_m)\right)}{K_M}\leq \sum_{m=1}^{M-1}\epsilon_m,$$
for all $M\geq 1$. As ${\displaystyle\sum_{m=1}^\infty\epsilon_m<\epsilon}$, we have that
\begin{equation}\label{eq:main thm 0}
    |\frac{1}{K_M}\sum_{n=1}^{K_M}(a_{n^2}-b_{n^2})\mu(n)|\leq \epsilon, ~\text{for all}~M\ge 1.
\end{equation}
On the other hand, it is easy to compute that $${\displaystyle\lim_{M\rightarrow \infty}\frac{1}{K_M}\sum_{n=1}^{K_M}a_{n^2}\mu(n)=\lim_{N\rightarrow \infty}\frac{1}{N}\sum_{n=1}^Na_{n^2}\mu(n)=\lim_{N\rightarrow \infty}\frac{1}{N}\sum_{n=1}^N\mu(n)^2}=\frac{6}{\pi^2}.$$ 
Together with \eqref{eq:main thm 0}, we obtain that 
\begin{equation}
    \begin{split}
        &\limsup_{N\rightarrow \infty}\frac{1}{N}\sum_{n=1}^Nb_{n^2}\mu(n) \\ &\geq \limsup_{M\rightarrow \infty}\frac{1}{K_M}\sum_{n=1}^{K_M}b_{n^2}\mu(n)\\
&\ge  \limsup_{M\rightarrow \infty}\frac{1}{K_M}\left|\sum_{n=1}^{K_M}a_{n^2}\mu(n)\right|-\limsup_{M\rightarrow \infty}\frac{1}{K_M}\left|\sum_{n=1}^{K_M}(a_{n^2}-b_{n^2})\mu(n)\right|\\
&>\frac{6}{\pi^2}-\epsilon.
    \end{split}
\end{equation}
\end{proof}

\begin{lem}\label{lem:1}
The dynamical system $(X_{\mathbf{b}}, \sigma)$ is strictly ergodic and measure-theoretically conjugate to
an odometer w.r.t. the Haar measure. In particular, $(X_{\mathbf{b}}, \sigma)$ is of zero entropy.
\end{lem}
\begin{rem*}
By the consturction and the methods in \cite{HLSY2019}, one can deduce that 
\begin{equation}\label{entropy bounded}
p_{\mathbf{b}}(l_M)\le (l_M+1)p_{\mathbf{a}}(l_M)^2,
\end{equation}
for all $M\ge 1$. Therefore, the dynamical system $(X_{\mathbf{b}}, \sigma)$ is of zero entropy. On the other hand, as the set $\{n\in\mathbb{Z}:a_n\neq 0\}$ has the upper Banach density zero, one can also prove the zero entropy of $(X_{\mathbf{b}}, \sigma)$ by the properties of Toeplitz sequence and the mean-continuous methods (see \cite{downarowicz2005survey,DG2016} for more details).
\end{rem*}
\begin{proof}
By the proof of \cite[Theorem 5.1]{downarowicz2005survey}, there is a sequence of invariant clopen subspaces $\{X_M\}_{M\ge 1}$ such that $\mathbf{b}\in X_M$ and we have a periodic cycle of disjoint sets  
$$
X_M \to \sigma(X_M) \to \cdots \to \sigma^{l_M-1}(X_M) \to X_M,
$$
for each $M\ge 1$.
We define a factor map $\pi:(X_{\mathbf{b}}, \sigma)\rightarrow \underleftarrow{\lim}(\mathbb{Z}_{l_{M}},T_{l_{M}})$ by 
$$\mathbf{x}\mapsto (j_1, j_2, \ldots,j_M,\ldots) ~\text{if}~\mathbf{x}\in \sigma^{j_M}(X_M).$$
Now we fix an $(j_1,\ldots,j_M,\ldots)\in \underleftarrow{\lim}(\mathbb{Z}_{l_{M}},T_{l_{M}})$ and pick arbitrarily $\mathbf{x}, \mathbf{y}\in \pi^{-1}(j_1,\ldots,j_M,\ldots)$. We will calculate the upper density of $B:=\{n\in \mathbb{Z}:x_n\neq y_n\}$. For arbitrary $N\geq 1$, let $l_{M-1} \leq N<l_{M}$ for some $M$.  Assume that $T^{p_n}\mathbf{b}\rightarrow \mathbf{x}$ and $T^{q_n}\mathbf{b}\rightarrow \mathbf{y}$. Since $X_M$ is clopen, there exists an $N_M$ such that for $n>N_M$, $T^{p_n-j_M}\mathbf{b}, T^{q_n-j_M}\mathbf{b}\in X_M$, i.e. $l_M|p_n-j_M $, $l_M|p_n-j_M $ for $n\geq N_M$. It follows that $p_n\equiv q_n(\mod l_{M})$ for $n\geq N_M$. On the other hand, we can find an $n>N_M$ such that $(x_0,\ldots,x_{N-1})=(b_{p_n},\ldots,b_{p_n+N-1})$ and $(y_0,\ldots,y_{N-1})=(b_{q_n},\ldots,b_{q_n+N-1})$. 
Define the set $$B_p:=\{k\in [p_n,p_n+N-1]:b_k\neq 0\text{ and }k\notin [rl_{\tilde{M}}-l_{\tilde{M}-1}, rl_{\tilde{M}}+l_{\tilde{M}-1}-1]~\forall r\in \mathbb{Z},\tilde{M}\leq M\}.$$ The set $B_q$ is defined similarly. As $p_n\equiv q_n(\mod l_{M})$, by the construction of $\mathbf{b}$ one has that
\begin{equation}\label{eq:n1}
    \#(B\cap [1,N])\leq \#(B_p)+\# (B_q).
\end{equation}
Let $A:=\{n\in\mathbb{Z}:a_n\not=0\}$. Notice that there exist $l_M\le p_n'<p_n''\le l_{\hat{M}}$ with $p_n''-p_n'\le N+1$ for some $\hat{M}>M$ such that for any $k\in B_p$ either $b_k=a_k$ or $b_k=a_r$ for $p_n'\le r\le p_n''$ with $l_{\hat{M}}|k-r$.
Thus 
\begin{equation}\label{eq:n2}
    \#(B_p)\le \# (A\cap [p_n,p_n+N-1])+ \#(A\cap [p_n',p_n'']).
\end{equation}
Combing \eqref{eq:n1}, \eqref{eq:n2} and the fact that $A$ has the upper density zero, we obtain
\begin{equation}\label{upper density of B}\overline{d}(B)=\limsup_{N\rightarrow \infty}\frac{1}{N}\#(B\cap [1,N])\leq 4\limsup_{M,N\rightarrow \infty}\frac{1}{N}\#(A\cap [M+1,M+N])=0.
\end{equation}
It follows from \eqref{upper density of B} that $$\lim_{N\rightarrow \infty}\frac{1}{N}d(\sigma^nx,\sigma^ny)=0,$$ 
for any metric $d$ on $X_{\mathbf{b}}$ compatible with its topology.
Then by the proof 1.$\Rightarrow$ 3. of \cite[Theorem 2.1]{DG2016}, we deduce that
 $(X_{\mathbf{b}}, \sigma)$ is strictly ergodic and measure-theoretically conjugate to
the odometer $\underleftarrow{\lim}(\mathbb{Z}_{l_{M}},T_{l_{M}})$.

\end{proof}

\begin{lem}\label{lem:2}
For any $M\geq 1$, one has that 
\begin{equation}\label{number of sets}\frac{\#\left(A_M\cap (\bigcup_{m=1}^{M-1}D_m)\right)}{K_M}<\frac{1}{2}\sum_{m=1}^{M-1}\epsilon_m.
\end{equation}
\end{lem}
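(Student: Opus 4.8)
The plan is to convert the count into a congruence count modulo $l_m$ and then apply Lemma \ref{number theory lemma}. Two structural facts drive the reduction. First, by \eqref{equation-construction} the sequences $\mathbf{a}^{(m-1)}$ and $\mathbf{a}^{(m)}$ differ only inside the windows $[rl_m-l_{m-1},rl_m+l_{m-1}-1]$, $r\in\mathbb{Z}$, so $D_m\subseteq\bigcup_{r\in\mathbb{Z}}[rl_m-l_{m-1},rl_m+l_{m-1}-1]$; equivalently $k^2\in D_m$ forces $k^2\bmod l_m\in\{0,\dots,l_{m-1}-1\}\cup\{l_m-l_{m-1},\dots,l_m-1\}$. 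Second --- and this is the point that saves the argument --- no multiple of $l_m$ lies in $D_m$: since $l_1\mid l_m$, every position divisible by $l_m$ was set to $0$ at Step $1$ and then recopied as the central value $0$ at each later step, so it carries the value $0$ in both $\mathbf{a}^{(m-1)}$ and $\mathbf{a}^{(m)}$. Hence I may delete the residue class $0$ and work only with the nonzero bad residues $\mathcal{B}_m:=\{1,\dots,l_{m-1}-1\}\cup\{l_m-l_{m-1},\dots,l_m-1\}$. This deletion is essential, not cosmetic: the class $0$ alone contains $\asymp 2^{\lfloor n_m/2\rfloor}$ squares per period, a $\sqrt{l_m}$ contribution the error budget cannot absorb.

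Next I would use that $l_m\mid l_{M-1}\mid K_M$ for every $m\le M-1$, so $\{1,\dots,K_M\}$ is a union of $K_M/l_m$ full periods of the periodic map $k\mapsto k^2\bmod l_m$; therefore $\#(A_M\cap D_m)\le (K_M/l_m)\,N_m$, where $N_m:=\#\{1\le k\le l_m:\ k^2\bmod l_m\in\mathcal{B}_m\}$, and the whole lemma reduces to bounding $N_m$. To bound $N_m$ I would split over $j=v_2(k)$, the $2$-adic valuation of $k$. For $2j<n_m$ one has $k^2\bmod l_m=4^{j}u$ with $u\equiv1\pmod 8$, and by Lemma \ref{number theory lemma} the number of $k$ in one period producing a prescribed such value is $\#B_{j,s,n_m}=2^{j}\#B_{0,s,n_m-2j}\le 2^{j+2}$. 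Membership of $4^{j}u$ in $\mathcal{B}_m$ forces $4^{j}u<l_{m-1}$ or $4^{j}u>l_m-l_{m-1}$, which confines $u$ to two intervals of total length $2\cdot 2^{n_{m-1}-2j}$, hence to at most about $2^{n_{m-1}-2j-2}$ residues $\equiv1\pmod 8$, and in particular forces $j<n_{m-1}/2$. Multiplying the two counts and summing the geometric series $\sum_{j\ge0}2^{n_{m-1}-2j-2}\cdot 2^{j+2}=\sum_{j\ge0}2^{n_{m-1}-j}$ then yields $N_m\le 2l_{m-1}$, up to a lower-order $O(\sqrt{l_{m-1}})$ term coming from the rounding at the extreme valuations.

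Finally I would assemble everything: dividing by $K_M$ gives $\#(A_M\cap D_m)/K_M\le N_m/l_m$, and the gap condition $n_m>n_{m-1}+2-\log_2\epsilon_m$, i.e.\ $l_{m-1}/l_m<\epsilon_m/4$, turns the leading term $2l_{m-1}/l_m$ into a quantity strictly below $\epsilon_m/2$; the $O(\sqrt{l_{m-1}})$ correction is swallowed because $\sqrt{l_{m-1}}/l_m=2^{-n_{m-1}/2}\,(l_{m-1}/l_m)$ is smaller than the leading term by an exponential factor and the $n_i$ are large. Summing over $m=1,\dots,M-1$ gives $\#(A_M\cap\bigcup_{m=1}^{M-1}D_m)/K_M<\tfrac12\sum_{m=1}^{M-1}\epsilon_m$, which is \eqref{number of sets}. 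I expect the main obstacle to be the per-period estimate $N_m$ in the middle step: one must control not only the low window but also the high window $k^2\equiv -j\pmod{l_m}$ (genuinely nonempty, since odd $k$ with $-j\equiv1\bmod 8$ hit it), and one must keep the constant at exactly $2l_{m-1}$ so that, after the exponential gap is invoked, the bound drops strictly below $\tfrac12\epsilon_m$; the deletion of the class $0$ in the first step is precisely what averts an otherwise fatal $\sqrt{l_m}$ loss.
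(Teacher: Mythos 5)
Your proposal is correct and follows essentially the same route as the paper: reduce membership in $D_m$ to the residue of $k^2$ modulo $l_m$ lying in the two windows of width $l_{m-1}$ around $0$, exploit that $l_m\mid K_M$ to pass to one period, count square roots of each admissible residue $4^r(8s+1)$ via Lemma \ref{number theory lemma}, sum the resulting geometric series over the $2$-adic valuation (your $j=v_2(k)$ is exactly the paper's parameter $r$), and finish with the gap condition $l_{m-1}/l_m<\epsilon_m/4$. The one substantive point where you diverge is your explicit removal of the residue class $0$ before counting, justified by noting that positions divisible by $l_m$ carry the value $0$ in both $\mathbf{a}^{(m-1)}$ and $\mathbf{a}^{(m)}$ and hence never lie in $D_m$. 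The paper instead works with the larger sets $D_m^{(i)}$ defined by the digit condition $h_{m-1}=0$ or $h_{m-1}=2^{n_m-n_{m-1}}-1$, and its claim that $4^r(8s+1)\in D_m^{(1)}$ forces $4^r(8s+1)<2^{n_{m-1}}$ silently discards the zero class (the term $(r,s)=(\tfrac{n_m}{2},0)$, i.e.\ $4^r(8s+1)=2^{n_m}\equiv 0$, violates it); since that class contributes on the order of $2^{\lfloor n_m/2\rfloor}$ roots per period, which can dominate $l_{m-1}$ under the stated constraints on $n_m$, your observation is a genuine tightening rather than a cosmetic one, and the same remark also disposes of the large-valuation tail in the upper window, as you indicate. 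The only loose ends in your write-up are the $O(\sqrt{l_{m-1}})$ rounding term (which needs, and admits, a one-line absorption into the strict inequality $4l_{m-1}/l_m<\epsilon_m$) and the case $m=1$, where the ``recopied central value'' argument does not apply to $\mathbf{a}^{(0)}=\mathbf{a}$ itself and one should instead note that $l_1\mid k^2$ forces $4\mid k$, hence $\mu(k)=0$.
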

\begin{proof}
Note that every positive integer $n$ can be uniquely expressed by the finite sum
$$n=\sum_{M=0}^\infty h_M(n)2^{n_M},$$
with the coefficients $\ 0\leq h_M(n)\le2^{n_{M+1}-n_M}-1$. For $1\leq m\leq M-1$, we define 
$$D_{m}^{(1)}=\{n\in \mathbb{Z}:  h_{m-1}(n)=0\}$$ and $$D_{m}^{(2)}=\{n\in \mathbb{Z}:h_{m-1}(n)=2^{n_{m}-n_{m-1}}-1\}.$$
Obviously, $D_m\subset D_{m}^{(1)}\cup D_{m}^{(2)}$. Noticing that $2^{n_m}$ divides the number $K_M=\#A_M$ and $ (k+2^{n_m})^2$ is congruent  to $k^2$ modulo $ 2^{n_m}$ for any integer $k$, we obtain that the set $A_M\cap D_{m}^{(i)}$ is ``periodic" with ``period" $2^{n_m}$ in the following sense:
\begin{equation}\label{eq:main thm 2}
    A_M\cap D_{m}^{(i)}=\bigsqcup_{0\leq \ell\leq \frac{K_M}{2^{n_m}}-1}\{(k+\ell 2^{n_m})^2: k\in A_M\cap D_{m}^{(i)}, k\le 2^{n_m} \},
\end{equation}
for $i=1,2$. Thus it is sufficient to focus on $A_M\cap D_{m}^{(i)}\cap\{k^2: 1\le k\le 2^{n_m} \}$.


Notice that for $k\in B_{r,s,n_m}$, we have $h_{m-1}(k^2)=h_{m-1}(4^r(8s+1))$. It follows that the set $\{k^2: k\in B_{r,s,n_m}\}$ is contained entirely in either $D_{m}^{(1)}\cap\{k^2: 1\le k\le 2^{n_m} \}$ or $D_{m}^{(2)}\cap\{k^2: 1\le k\le 2^{n_m} \}$ for every $r,s$.
Thus we have the decomposition that
\begin{equation}\label{eq:main thm 1}
    A_M\cap D_{m}^{(i)}\cap\{k^2: 1\le k\le 2^{n_m} \}=\bigsqcup_{r,s} \left(A_M\cap B_{r,s,n_m}\right), ~\text{for}~i=1,2,
\end{equation}
where $r,s$ run over all non-negative integers satisfying that $ 4^r(8s+1)\le 2^{n_m}$ and that $4^r(8s+1)\in D_{m}^{(i)}$.
Combining \eqref{eq:main thm 2} and \eqref{eq:main thm 1}, we have
\begin{equation}\label{eq:main thm 3}
        \#(A_M\cap D_{m}^{(i)})\leq {\displaystyle\frac{K_M}{2^{n_m}}\sum_{r,s} \# B_{r,s,n_m}}
        \le {\displaystyle \frac{K_M}{2^{n_m}}\sum_{r,s}2^{r+2}}, ~\text{for}~i=1,2,
\end{equation}
where $r,s$ run over all non-negative integers satisfying that $ 4^r(8s+1)\le 2^{n_m}$ and $4^r(8s+1)\in D_{m}^{(i)}$. In fact, the last inequality is due to Lemma \ref{number theory lemma}. 
We calculate the upper bound of $\#(A_M\cap D_{m}^{(1)})$ as follows: by \eqref{eq:main thm 3} and the definitions of $n_M$ and $\epsilon_M$, we have
\begin{equation*}
    \begin{split}
    \#(A_M\cap D_{m}^{(1)})&\le
    \frac{4K_M}{2^{n_m}} \sum_{r=0}^{\lfloor n_{m-1}/2\rfloor} 2^r \sum_{s} \# \{s\ge 0: 4^r(8s+1)<2^{n_m-1} \}\\
    &\le \frac{4K_M}{2^{n_m}}\sum_{r=0}^{\lfloor n_{m-1}/2\rfloor}\frac{2^{n_{m-1}-r}}{8}=\frac{K_M}{2^{n_m-n_{m-1}}}\sum_{r=0}^{\lfloor n_{m-1}/2\rfloor}\frac{1}{2^{r+1}}
    \\ &\leq \frac{K_M}{2^{n_m-n_{m-1}}}<\frac{\epsilon_m}{4}K_M,
    \end{split}
\end{equation*}
where the first inequality is due to the fact that if two non-negative integers $r,s$ satisfy $ 4^r(8s+1)\le 2^{n_m}$ and $4^r(8s+1)\in D_{m}^{(1)}$, then $1\leq 4^r(8s+1)< 2^{n_{m-1}}$.
On the other hand, noticing that if two non-negative integers $r,s$ satisfy $ 4^r(8s+1)\le 2^{n_m}$ and $4^r(8s+1)\in D_{m}^{(2)}$, then $2^{n_m}-2^{n_{m-1}}\leq 4^r(8s+1)< 2^{n_{m}}$, we obtain the upper bound of $\#(A_M\cap D_{m}^{(2)})$ in a similar way with that of $\#(A_M\cap D_{m}^{(1)})$:
\begin{equation*}
    \begin{split}
    &\#(A_M\cap D_{m}^{(2)})\\
    &\le\frac{4K_M}{2^{n_m}} \sum_{r=0}^{\lfloor n_{m-1}/2\rfloor} 2^r \sum_{s} \# \{s\ge 0: 2^{n_m}-2^{n_{m-1}}\leq 4^r(8s+1)< 2^{n_{m}} \}\\
   &<\frac{\epsilon_m}{4}K_M.
    \end{split}
\end{equation*}


Therefore, we have
\begin{equation*}
    \begin{split}
        \#\left(A_M\cap (\bigcup_{m=1}^{M-1}D_m)\right)
&\leq \#\left(A_M\cap (\bigcup_{m=1}^{M-1}(D_{m}^{(1)}\cup D_{m}^{(2)}))\right)\\
&\leq {\displaystyle\sum_{\substack{i=1,2\\ 1\leq m\leq M-1}}\#(A_M\cap D_{m}^{(i)})}\\
&< \frac{K_M}{2}\sum_{m=1}^{M-1}\epsilon_{m-1},
    \end{split}
\end{equation*}
which completes the proof.
\end{proof}





\section{General case.}\label{sec:remark}


In this section, we discuss a generalization of Theorem \ref{main theorem}. 
Firstly applying \cite[Theorem 4.1]{downarowicz2015odometers} to Lemma \ref{lem:1}, we obtain that the sequence $\mathbf{b}$ in Section \ref{proof of main theorem} satisfies Sarnak Conjecture.
Then note that we can adapt the proof of Theorem \ref{main theorem} to arithmetic (not necessarily multiplicative) function other than M\"obius function:
 \begin{thm}\label{main theorem 2}
 Let $\omega: \mathbb{N} \to \mathbb{C}$.  
 Suppose $\limsup_{n\to \infty} \frac{1}{N}\sum_{n=1}^{N}|\omega(n)|>0$.
 Then there exists a Toeplitz sequence $\mathbf{b}$ such that
 \begin{enumerate}
    \item the dynamical system $(X_{\mathbf{b}},\sigma)$ is of entropy zero and strictly ergodic;
    \item the sequence $\mathbf{b}$ satisfies Sarnak Conjecture;
    \item ${\displaystyle\limsup_{N\rightarrow \infty}\frac{1}{N}\left|\sum_{n=1}^Nb_{n^2}\omega(n)\right|>0.}$
\end{enumerate}
  \end{thm}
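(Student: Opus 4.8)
The plan is to re-run the construction in the proof of Theorem~\ref{main theorem} verbatim, changing only the target sequence $\mathbf{a}$ so that the role played there by the identity $a_{n^2}\mu(n)=\mu(n)^2\ge 0$ is taken over by a nonnegativity extracted from $\omega$. First I would reduce to a real, nonnegative correlation: since $\limsup_N\frac1N\sum_{n=1}^N|\omega(n)|=c>0$ and $|\omega|\le|\mathrm{Re}\,\omega|+|\mathrm{Im}\,\omega|$, one of the four functions $(\pm\mathrm{Re}\,\omega)_+$, $(\pm\mathrm{Im}\,\omega)_+$, say $g:=(\mathrm{Re}\,\omega)_+$, satisfies $\limsup_N\frac1N\sum_{n\le N}g(n)\ge c/4=:\eta>0$. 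I then set $a_{k^2}=1$ when $\mathrm{Re}\,\omega(k)>0$ and $a_n=0$ otherwise, so that $\mathbf{a}\in\{0,1\}^{\mathbb Z}$ is supported on the squares and $\mathrm{Re}\sum_{n\le N}a_{n^2}\omega(n)=\sum_{n\le N}g(n)$. With this $\mathbf{a}$ I would build $\{\mathbf{a}^{(M)}\}$ and $\mathbf{b}=\lim_M\mathbf{a}^{(M)}$ by the identical induction.

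Three of the conclusions then follow with no new work. The sequence $\mathbf{b}$ is Toeplitz for the same reason as before. Its entropy vanishes because Lemma~\ref{lem:1} uses only that $\mathbf{a}$ is supported on the squares (so that $p_{\mathbf{a}}(n)$ is dominated by a quadratic polynomial) together with the combinatorial overwriting rule; the cardinality of the alphabet is irrelevant to that computation, so $h(X_{\mathbf{b}},\sigma)=0$. Finally, $\mathbf{b}$ satisfies Sarnak's conjecture by the argument of Section~\ref{sec:sarnak}: the decomposition $\mathbf{a}^{(M)}=\sum_{m=0}^M\mathbf{c}^{(m)}$ splits $\mathbf{a}^{(M)}$ into a part $\mathbf{c}^{(0)}$ with $|c^{(0)}_n|\le|a_n|$, whence $\frac1N\sum_{n\le N}|c^{(0)}_n|\le\frac1N\sum_{n\le N}|a_n|\le N^{-1/2}\to0$, together with finitely many periodic sequences to which Davenport's estimate applies; the passage from $\mathbf{a}^{(M)}$ to $\mathbf{b}$ is then identical.

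It remains to prove item (3). Writing $D=\{n:a_n\ne b_n\}$ and using $|a_{n^2}|,|b_{n^2}|\le 1$, for every $N$ one has
\begin{equation*}
\frac1N\Bigl|\sum_{n\le N}b_{n^2}\omega(n)\Bigr|\ \ge\ \frac1N\sum_{n\le N}g(n)\ -\ \frac2N\sum_{\substack{n\le N\\ n^2\in D}}|\omega(n)|\ =:\ A_N-B_N.
\end{equation*}
Since $\limsup_N(A_N-B_N)\ge\limsup_N A_N-\limsup_N B_N$ and $\limsup_N A_N\ge\eta$, item (3) follows once $\limsup_N B_N<\eta$. If $\omega$ is bounded this is immediate: the counting in Lemma~\ref{lem:2}, applied to all $N$ rather than only to $K_M$, gives $\limsup_N\frac1N\#\{n\le N:n^2\in D\}\le\frac12\sum_m\epsilon_m$, so $\limsup_N B_N\le \|\omega\|_\infty\sum_m\epsilon_m$, and choosing $\sum_m\epsilon_m<\eta/\|\omega\|_\infty$ finishes the proof.

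The genuine obstacle is an unbounded $\omega$, whose $\ell^1$-mass could in principle concentrate on the overwritten squares, so that a density bound on $\{n:n^2\in D\}$ no longer controls $\frac1N\sum_{n^2\in D}|\omega(n)|$. I would resolve this by choosing the scales $n_M$ adaptively in terms of $\omega$ rather than fixing them in advance. The two relevant facts are that $S_M:=\{n:n^2\in D_M\}$ meets only $[2^{n_M/2},\infty)$ and, by Lemma~\ref{number theory lemma}, is a union of boundedly many residue classes modulo $2^{n_M}$ of total density $\lesssim\epsilon_M$; and that, as $n_M$ ranges over a long dyadic window with $n_{M-1}$ held fixed, these residue patterns vary enough that an averaging (pigeonhole) argument over the admissible values of $n_M$ produces one for which $S_M$ captures at most an $\epsilon_M$-fraction of $\frac1N\sum_{n\le N}|\omega(n)|$ up to the relevant scale. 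Performing this selection at every stage, summing the contributions over $m$, and evaluating the left-hand side along the subsequence $K_M$ (which already excludes $D_{M'}$ for all $M'\ge M$, while also being chosen to realize the limsup defining $\eta$) would drive the error term below $\eta$ and complete the proof. I expect this adaptive mass control, and not any of the transfer steps, to be the crux of the argument.
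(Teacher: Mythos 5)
Your proposal follows the paper's strategy at the structural level (same inductive construction, same Lemmas \ref{number theory lemma}, \ref{lem:1} and \ref{lem:2}), but the key substitution is genuinely different, and arguably better. The paper simply replaces $\mathbf{a}$ by $a_{k^2}=\omega(k)$ and declares the rest identical; taken literally this puts an infinite alphabet into the symbolic construction, and it makes the main term $\frac1N\sum_{n\le N}a_{n^2}\omega(n)=\frac1N\sum_{n\le N}\omega(n)^2$, which need not be bounded away from zero under the stated hypothesis (e.g.\ $\omega(n)=i^n$ has $|\omega|\equiv 1$ but $\sum\omega(n)^2=\sum(-1)^n$). Your choice $a_{k^2}=\mathbf{1}_{\{\mathrm{Re}\,\omega(k)>0\}}$ avoids both problems: the alphabet is $\{0,1\}$, so Lemma \ref{lem:1} applies verbatim, and the main term becomes $\frac1N\sum_{n\le N}(\mathrm{Re}\,\omega(n))_+$, which is nonnegative; nonnegativity is also what lets you guarantee that the sparse subsequence $K_M\sim 2^{n_M/2}$ can be steered into windows where this average stays $\ge\eta/2$, a point the paper silently needs as well. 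The transfer of the entropy bound and of the Sarnak property of $\mathbf{b}$ (Section \ref{sec:sarnak}) goes through exactly as you say, since both use only that $\mathbf{a}$ is supported on the squares with bounded values.

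The one genuine gap is the one you flag yourself: for unbounded $\omega$, the density bound of Lemma \ref{lem:2} on $\{n\le K_M: n^2\in D\}$ does not control the weighted sum $\frac1{K_M}\sum_{n^2\in D}|\omega(n)|$, and your adaptive choice of the scales $n_m$ is only a sketch; in particular the classes $k$ with $k^2<2^{n_{m-1}}$ lie in $D_m^{(1)}$ for \emph{every} admissible $n_m$, so the pigeonhole over $n_m$ cannot be applied to all of $S_m$ and the contribution of these small $k$ must be estimated separately (which requires some control on how $|\omega|$ can concentrate). This step is not carried out, so the unbounded case remains unproven in your write-up. To be fair, the paper's own two-line reduction does not address this either (its bound \eqref{an sim bn} tacitly uses $|\omega|\le 1$), so your proposal is at least as complete as the published argument and, for bounded $\omega$, is a full and correct proof.
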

The proof of $1.$ and $3.$ of Theorem \ref{main theorem 2} is followed by the same argument of the proof of Theorem \ref{main theorem}, where we replace $\mathbf{a}$ by the sequence $\mathbf{a}(\omega)=(a_n(\omega))_{n\in \mathbb{N}}$ defined by
\begin{equation}
a_n(\omega):=\begin{cases} \omega(k), & n=k^2\text{ for some }k\in \mathbb{N} \\
 0 & \text{otherwise}.
 \end{cases}
 \end{equation}
The proof of $2.$ of Theorem \ref{main theorem 2} is followed by Lemma \ref{lem:1} and \cite[Theorem 4.1]{downarowicz2015odometers}.

\section{Further remarks.}
Recall that the {\it sequence topological entropy} of a dynamical system $(X, T)$ along the sequence $(P(k))_{k\in \mathbb{N}}$ is defined by
\begin{equation}\label{eq:seq}
    h_P(X, T)=\sup_{\alpha}\limsup_{n\to \infty} \frac{\log \mathcal{N}(\bigvee_{0\le k\le n-1}T^{-P(k)}\alpha)}{n},
\end{equation}
where $\alpha$ runs over all finite open covering of $X$ and $\mathcal{N}(\theta)$ is the minimal cardinality among all cardinalities of sub-covers of $\theta$.
A dynamical system is said to be of {\em polynomial entropy zero} if its sequence topological entropy is equal to zero along every polynomial sequence.

In the same paper where polynomial Sarnak conjecture was formulated, Eisner \cite{E2018} also proposed a weaker version of  polynomial Sarnak conjecture for minimal system:
\begin{ques}[Conjecture 2.4, \cite{E2018}]\label{Polynomial Sarnak 2}
Let $(X,T)$ be a minimal topological dynamical system of zero polynomial entropy. Then \eqref{polynomial sar eq} holds for every $f\in C(X)$, every polynomial $p:\mathbb{N}\rightarrow \mathbb{N}_0$ and every $x\in X$.
\end{ques}

Since a dynamical system of zero sequential entropy (along some increasing sequence in $\mathbb{N}$) is of zero topological entropy, Conjecture \ref{Polynomial Sarnak 2} follows from by polynomial Sarnak conjecture.

Unfortunately, We don't know how to calculate the polynomial entropy of $\mathbf{b}$ (in Theorem \ref{main theorem}) along any polynomial so far. Thus we have no idea whether the sequence $\mathbf{b}$ is a counter-example of Conjecture  \ref{Polynomial Sarnak 2}. However, we could show the following property of $\mathbf{b}$ which is related to the  polynomial entropy of $\mathbf{b}$. Let $P: \mathbb{N}\rightarrow \mathbb{N}_0$ a polynomial. For every $n\in \mathbb{N}$, we define $\beta_P(n)$ to be the largest integer $m$ such that $P(m)\le n$.  By \eqref{entropy bounded}, we have
\begin{equation*}
\begin{split}
    \limsup_{M\to \infty} \frac{\log p_{\mathbf{b}}(l_M)}{\beta_P(l_M)} &= \limsup_{M\to \infty} \frac{\log (l_M+1)Q(l_M)^2}{\beta_P(l_M)}\\
    &\le \lim_{n\to \infty} \frac{\log (P(n)+1)Q(P(n))^2}{n}=0.
\end{split}
\end{equation*}
We remark that the above lim-sup  is incomparable with the limit \eqref{eq:seq}. However, the above lim-sup is the upper bound of the limit that has the form \eqref{eq:seq} where $\limsup$ is replaced by $\liminf$.

\section*{Acknowledgement.}
We are grateful to Tanja Eisner and Mariusz Lema\'nczyk for helpful remarks and encouraging discussions. Thanks are also due to Ai-Hua Fan for valuable comments on an earlier draft. We thank the reviewer's valuable remarks and comments.

\specialsectioning
\bibliographystyle{alpha}
\bibliography{references}

\Addresses
\end{document}